\documentclass[12pt,a4]{amsart}
\usepackage{amsthm,amsmath,amssymb}
\setcounter{tocdepth}{2}
\usepackage{titlesec}
\makeatletter
\def\section{\@startsection{section}{1}%
  \z@{.7\linespacing\@plus\linespacing}{.5\linespacing}%
  {\normalfont\scshape\centering}}
\def\subsection{\@startsection{subsection}{2}%
  \z@{.5\linespacing\@plus.7\linespacing}{-.5em}%
  {\normalfont\bfseries}}
\makeatother
\titleformat*{\section}{\large\bfseries}
\titleformat*{\subsection}{\large\bfseries}
\newtheorem{theorem}{Theorem}[section]

\newtheorem{lemma}{Lemma}[section]

\theoremstyle{remark}
\newtheorem{rem}{Remark}

\title[the Discrete Mean of the Derivative of the Hardy Function]
{On the Discrete Mean of the Derivative of Hardy's $Z$-Function}
\author{Hirotaka Kobayashi}
\date{}
\address{Graduate School of Mathematics, Nagoya University, Furocho, Chikusaku, Nagoya 464-8602, Japan}
\email{m17011z@math.nagoya-u.ac.jp}
\begin{document}

\maketitle

\begin{abstract}
We consider the sum of the square of the derivative of Hardy's $Z$-function over the zeros of Hardy's $Z$-function. If the Riemann Hypothesis is true, it is equal to the sum of $|\zeta'(\rho)|^2$, where $\rho$ runs over the zeros of the Riemann zeta-function.
In 1984, Gonek obtained an asymptotic formula for the sum. In this paper we prove a sharper formula. This result was obtained by Milinovich with a better error term.
\end{abstract}

\section{update: 21/11/2018}
This result was proved by Milinovich \cite{Mil} in his PhD thesis in 2008. Moreover he obtained a better error term. He used $\zeta'(s)$, but we considered $Z'(t)$.

\section{Introduction}

In this paper, we discuss the discrete mean of $Z'(t)$ over the zeros of $Z(t)$,
where $Z(t)$ is Hardy's $Z$-function.
We denote the complex variable by $s=\sigma+it$ with $\sigma, t \in \mathbb{R}$.
Throughout this article, we assume that the Riemann Hypothesis (RH) is true,
and then the mean-value corresponds to that of $\zeta'(\rho)$,
where $\rho$ runs over the zeros of the Riemann zeta-function $\zeta(s)$.

In 1984, Gonek \cite{Go} stated that
\begin{equation}\label{Gonek}
\sum_{0 < \gamma \leq T} \left|\zeta'(\rho) \right|^2
= \frac{1}{24\pi}T \log^{4}\frac{T}{2\pi}+O(T\log^{3}T),
\end{equation}
where $\rho=\frac{1}{2}+i\gamma$ are the zeros of $\zeta(s)$.

On the other hand, Conrey and Ghosh \cite{C&G} showed that 
\begin{equation}
\sum_{0 < \gamma \leq T} \max_{\gamma < t \leq \gamma^{+}} \left|\zeta \left(\frac{1}{2}+it \right) \right|^2
\sim \frac{e^2-5}{4\pi}T\log^{2}\frac{T}{2\pi},
\end{equation}
where $\gamma$ and $\gamma^{+}$ are ordinates of consecutive zeros of $\zeta(s)$.
This summand means the extremal value of Hardy's $Z$-function.
They calculated the integral
\begin{equation*}
\frac{1}{2\pi i}\int_{C} \frac{Z_1'}{Z_1}(s)\zeta(s)\zeta(1-s)ds,
\end{equation*}
where $C$ is positively oriented rectangular path with vertices $c+i$, $c+iT$, $1-c+iT$ and $1-c+i$ where $c=\frac{5}{8}$, and $Z_1(s)$ is defined by
\begin{equation}
Z_1(s) :=\zeta'(s)-\frac{1}{2}\omega(s)\zeta(s)
\end{equation}
with
\begin{equation*}
\omega(s)=\frac{\chi'}{\chi}(s),
\end{equation*}
where $\chi(s)=2^s \pi^{s-1}\sin \left(\frac{\pi s}{2} \right)\Gamma(1-s)$.

\begin{rem}
Actually, they considered the rectangle whose imaginary part is $T\leq \Im{s}\leq T+T^{3/4}$ for convenience.
\end{rem}

In this article, by considering the integral
\begin{equation*}
\frac{1}{2\pi i}\int_{C} \frac{\zeta'}{\zeta}(s)Z_1(s)Z_1(1-s)ds,
\end{equation*}
we will prove the following theorem.
\begin{theorem}
If the RH is true, then for any sufficiently large $T$,
\begin{equation}
\begin{split}
\sum_{0 <\gamma \leq T} \left|Z'(\gamma) \right|^2
&= \frac{1}{24\pi} T\log^{4}\frac{T}{2\pi}+\frac{2\gamma_0-1}{6\pi}T\log^{3}\frac{T}{2\pi} \\
&\quad +a_1T\log^{2}\frac{T}{2\pi}+a_2T\log\frac{T}{2\pi}+a_3T + O(T^{\frac{3}{4}}\log^{\frac{7}{2}}T),
\end{split}
\end{equation}
where $\gamma_0$ is the Euler constant and $a_i \ (i=1, 2, 3)$ are constants that can be explicitly expressed by the Stieltjes constants and the summation is over the zeros $\gamma$ of $Z(t)$ with the multiplicity.
\end{theorem}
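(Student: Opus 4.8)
\emph{Step 1: reduce to the zeta-function.} The plan is to first pass from $Z$ to $\zeta$. From $Z(t)=e^{i\theta(t)}\zeta(\tfrac12+it)$, using $\chi(\tfrac12+it)=e^{-2i\theta(t)}$ and $\omega(\tfrac12+it)=-2\theta'(t)$, differentiation together with $\zeta(\tfrac12+i\gamma)=0$ gives $Z'(\gamma)=ie^{i\theta(\gamma)}Z_1(\tfrac12+i\gamma)$. Since $Z(t)$, hence $Z'(t)$, is real, this yields $|Z'(\gamma)|^2=|Z_1(\tfrac12+i\gamma)|^2=Z_1(\rho)Z_1(1-\rho)$ with $\rho=\tfrac12+i\gamma$, the last step using RH (so that $1-\rho=\overline\rho$) and $\overline{Z_1(s)}=Z_1(\overline s)$. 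So it suffices to evaluate $\sum_{0<\gamma\le T}Z_1(\rho)Z_1(1-\rho)$.

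\emph{Step 2: contour integral and the functional equation.} At a zero $\rho$ of $\zeta$ of multiplicity $m$, the integrand $\frac{\zeta'}{\zeta}(s)Z_1(s)Z_1(1-s)$ has a simple pole of residue $mZ_1(\rho)Z_1(1-\rho)$, which vanishes for $m\ge2$ because then $\zeta(\rho)=\zeta'(\rho)=0$ forces $Z_1(\rho)=0$. Let $C$ be the positively oriented rectangle with corners $\tfrac12\pm\delta+i$ and $\tfrac12\pm\delta+iT$, where $\delta=1/\log T$; under RH its interior contains exactly the zeros with $1<\gamma\le T$, and $\frac{\zeta'}{\zeta}$ is holomorphic on and to the right of the right edge, with $\frac{\zeta'}{\zeta}(\tfrac12+\delta+it)\ll\log^2t$. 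The residue theorem gives
\[
\sum_{0<\gamma\le T}\bigl|Z'(\gamma)\bigr|^2=\frac{1}{2\pi i}\oint_C\frac{\zeta'}{\zeta}(s)\,Z_1(s)\,Z_1(1-s)\,ds+O(1).
\]
The functional equation $\zeta(s)=\chi(s)\zeta(1-s)$ gives $Z_1(1-s)=-\chi(1-s)Z_1(s)$ and $\frac{\zeta'}{\zeta}(1-s)=\omega(s)-\frac{\zeta'}{\zeta}(s)$; combined with $\overline{g(s)}=g(\overline s)$ for $g\in\{\zeta,\zeta',\chi,\omega\}$, the integral over the left edge $\Re s=\tfrac12-\delta$ becomes the conjugate of an integral over the right edge. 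The bottom edge is $O(1)$, and the top edge — after moving $T$ by $O(1)$ to avoid ordinates of zeros, which costs $O(\log^5T)$ — is $O(T^{1/2+\varepsilon})$ by the convexity (or, under RH, Lindelöf) bounds for $\zeta,\zeta'$. Everything thus reduces to a single integral of $\frac{\zeta'}{\zeta}(s)\chi(1-s)Z_1(s)^2$ along $\Re s=\tfrac12+\delta$ (where $|\chi(1-s)|\asymp1$), plus its conjugate.

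\emph{Step 3: the main term.} On $\Re s=\tfrac12+\delta$ I would insert the approximate functional equation (Riemann--Siegel formula) for $Z_1(s)$: two Dirichlet polynomials of length $\asymp\sqrt{t/2\pi}$ — one bare, one carrying the factor $\chi(s)$, with coefficients of the shape $\pm\log\frac{\sqrt{t/2\pi}}{n}$ — plus a remainder $\ll t^{-1/4}(\log t)^{3/2}$. Squaring and using $\chi(1-s)\chi(s)=1$, the remainder crossed with the main sums (of size $\ll(\log t)^2$) and integrated over $t\in[1,T]$ contributes $O(T^{3/4}\log^{7/2}T)$, which is the dominant error. For the genuine main terms one expands $\frac{\zeta'}{\zeta}(s)=-\sum_k\Lambda(k)k^{-s}$ and the Dirichlet polynomials and extracts the diagonal, the pieces that still carry $\chi(1-s)$ being handled by a Gonek-type stationary-phase lemma — $\frac{1}{2\pi i}\int\chi(1-s)n^{-s}\,ds$ localizes at $t\approx2\pi n$ and is essentially $1$ for $n\le\tfrac{T}{2\pi}$ and negligible otherwise. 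In every case one is reduced to an arithmetic sum $\sum_{n\le T/2\pi}c_n$, where $c_n$ is on average a polynomial in $\log n$ of degree $4$ built from $\Lambda$- and $\log$-weighted divisor sums; evaluating it by Perron/partial summation gives the leading term $\frac{1}{24\pi}T\log^4\frac{T}{2\pi}$ (in accordance with Gonek's (1)), the next term $\frac{2\gamma_0-1}{6\pi}T\log^3\frac{T}{2\pi}$ (the Euler constant entering via sums $\sum_{d\le x}\frac{\log d}{d}$, the $-1$ via the shift in $\sum_{n\le x}\log^k n$), and the lower coefficients, governed by the Laurent expansions at $s=1$ of $\zeta$ and its derivatives, give $a_1T\log^2\frac{T}{2\pi}+a_2T\log\frac{T}{2\pi}+a_3T$ with the $a_i$ explicit in the Stieltjes constants. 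The remaining errors — off-diagonal terms, the transition range $n\asymp\tfrac{T}{2\pi}$, the $O(1/t)$-corrections in the asymptotics of $\chi$ and $\omega$, second-order stationary-phase terms, and the horizontal edges — are all $O(T^{1/2+\varepsilon})$.

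\emph{Main obstacle.} The hard part will not be any single estimate but the cumulative bookkeeping: carrying the main term down to the constant $a_3T$ requires tracking the full secondary structure of the $\chi$- and $\omega$-asymptotics and the second-order mean values of the divisor sums (which is exactly where the Stieltjes constants surface), all while keeping every error contribution strictly below $T^{3/4}\log^{7/2}T$. The most delicate point is the interface between the Riemann--Siegel expansion of $Z_1$ near the critical line, the $t$-dependent truncations it introduces, and the stationary-phase treatment of the $\chi(1-s)$-twisted integral — that is where both the transition-range analysis and the dominant $T^{3/4}$-size error reside, and where no lower-order term can be dropped if the constants $\tfrac{1}{24\pi}$ and $\tfrac{2\gamma_0-1}{6\pi}$ are to come out exactly.
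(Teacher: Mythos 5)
Your Steps 1 and 2 start along the same lines as the paper (residues of $\frac{\zeta'}{\zeta}(s)Z_1(s)Z_1(1-s)$, functional equation to fold the left edge onto the right edge), but there are two genuine gaps in the plan. First, your summary ``everything thus reduces to a single integral of $\frac{\zeta'}{\zeta}(s)\chi(1-s)Z_1(s)^2$ along $\Re s=\tfrac12+\delta$, plus its conjugate'' silently discards the $\omega(s)$ term produced by $\frac{\zeta'}{\zeta}(1-s)=\omega(s)-\frac{\zeta'}{\zeta}(s)$. That term is not a lower-order nuisance: on the critical line it is $\frac{1}{2\pi}\int_1^T\log\frac{t}{2\pi}\,Z'(t)^2\,dt$ up to admissible errors, and it carries the entire leading term $\frac{1}{24\pi}T\log^4\frac{T}{2\pi}$ as well as the dominant error $O(T^{3/4}\log^{7/2}T)$ (the paper evaluates it by integration by parts against Hall's asymptotic for $\int_0^TZ'(t)^2dt$, which is exactly where the polynomial $P_3$ and the $\log^{7/2}$ exponent come from). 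The piece you keep, $2\Re\int\frac{\zeta'}{\zeta}(s)\chi(1-s)Z_1(s)^2ds$, only contributes from the $T\log^3\frac{T}{2\pi}$ level downward, so as written your Step 3 cannot produce the stated main term.

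Second, your Step 3 expands $\frac{\zeta'}{\zeta}(s)=-\sum_k\Lambda(k)k^{-s}$ on the line $\Re s=\tfrac12+1/\log T$, where this Dirichlet series diverges (it converges only for $\Re s>1$); no amount of stationary-phase bookkeeping repairs that. This is precisely why the paper does \emph{not} work near the critical line for this piece: it shifts $I_1$ to $\Re s=b=\tfrac98$, where $\frac{\zeta'}{\zeta}(s)\chi(1-s)Z_1(s)^2$ splits into absolutely convergent Dirichlet series times $\chi(1-s)$ and powers of $\omega(s)$, and then applies Gonek's lemma (Lemma 3.4) followed by Perron's formula for $\sum_{mn\le x}\Lambda(m)D(n)$. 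If you want to salvage your near-critical-line approach you would need to replace the Dirichlet-series expansion of $\frac{\zeta'}{\zeta}$ by a contour shift or an explicit-formula substitute, at which point you are essentially reconstructing the paper's two-contour decomposition. I would recommend restructuring the argument as: (a) keep the $\omega$ term and evaluate it via Hall (or via your Riemann--Siegel computation, which amounts to reproving Hall's theorem), and (b) move the remaining integral to the right of $\sigma=1$ before expanding anything in Dirichlet series.
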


Actually, the sum on the left-hand side is equal to the sum on the left-hand side of $(\ref{Gonek})$. Hence this theorem is an improvement on Gonek's result.

In the proof, we apply Hall's result \cite{Hall} that for each $k=0, 1, 2, \cdots$, and any sufficiently large $T$,
\begin{equation}
\int_{0}^{T} Z^{(k)}(t)^2dt
=\frac{1}{4^k(2k+1)}TP_{2k+1}\left(\log \frac{T}{2\pi}\right)+O(T^{\frac{3}{4}}\log^{2k+\frac{1}{2}}T),
\end{equation}
where $P_{2k+1}(x)$ is the monic polynomial of degree $2k+1$ given by
\begin{equation*}
P_{2k+1}(x)=W_{2k+1}(x)+(4k+2)\sum_{h=0}^{2k} \binom {2k}{h}(-2)^h \gamma_h W_{2k-h}(x),
\end{equation*}
in which
\begin{equation*}
W_{g}(v)=\frac{1}{e^v}\int_{0}^{e^v}\log^g u du,
\quad \zeta(s)=\frac{1}{s-1}+ \sum_{h=0}^{\infty}\frac{(-1)^h \gamma_h}{h!}(s-1)^h.
\end{equation*}
The $\gamma_h$ are called the Stieltjes constants.

\section{Preliminary lemmas}

We prepare some lemmas for our proof.
\begin{lemma}
$Z_1(s)$ has the following properties.
\begin{enumerate}
\item If $s=\frac{1}{2}+it$, then $|Z_1(s)|=|Z'(t)|$.
\vspace{2.5mm}
\item $Z_1(s)$ satisfies the functional equation $-Z_1(s)=\chi(s)Z_1(1-s)$ for all $s$.
\end{enumerate}
\end{lemma}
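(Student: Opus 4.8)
The plan is a short direct computation from the functional equation $\zeta(s)=\chi(s)\zeta(1-s)$; there is essentially no obstacle, only two points where the normalizations must be handled carefully.

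For (ii), I would differentiate the functional equation. Using $\chi'(s)=\omega(s)\chi(s)$ and $\chi(s)\zeta(1-s)=\zeta(s)$ one gets $\zeta'(s)=\omega(s)\zeta(s)-\chi(s)\zeta'(1-s)$, that is $\chi(s)\zeta'(1-s)=\omega(s)\zeta(s)-\zeta'(s)$. One also needs the symmetry $\omega(1-s)=\omega(s)$, which follows from $\chi(s)\chi(1-s)=1$ (apply the functional equation twice, or use the reflection formula for $\Gamma$). Then
\[
\chi(s)Z_1(1-s)=\chi(s)\zeta'(1-s)-\tfrac12\omega(1-s)\chi(s)\zeta(1-s)=\bigl(\omega(s)\zeta(s)-\zeta'(s)\bigr)-\tfrac12\omega(s)\zeta(s)=-Z_1(s),
\]
and since all functions in sight are meromorphic this holds for every $s$.

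For (i), recall $Z(t)=e^{i\theta(t)}\zeta(\tfrac12+it)$, where $\theta$ is the Riemann--Siegel theta-function, normalized so that $\chi(\tfrac12+it)=e^{-2i\theta(t)}$ and $\theta(t)\in\mathbb R$ for real $t$; equivalently $Z(t)=\chi(\tfrac12+it)^{-1/2}\zeta(\tfrac12+it)$ for the appropriate branch. Logarithmic differentiation of $\chi(\tfrac12+it)=e^{-2i\theta(t)}$ in $t$ gives $i\omega(\tfrac12+it)=-2i\theta'(t)$, i.e. $\theta'(t)=-\tfrac12\omega(\tfrac12+it)$, so logarithmic differentiation of $Z(t)$ yields, away from the zeros of $\zeta(\tfrac12+it)$,
\[
Z'(t)=iZ(t)\Bigl(\frac{\zeta'}{\zeta}(\tfrac12+it)-\tfrac12\omega(\tfrac12+it)\Bigr)=iZ(t)\,\frac{Z_1(\tfrac12+it)}{\zeta(\tfrac12+it)}=i\,\chi(\tfrac12+it)^{-1/2}Z_1(\tfrac12+it).
\]
Since $|\chi(\tfrac12+it)|=1$ (equivalently $\theta(t)$ is real), taking absolute values gives $|Z'(t)|=|Z_1(\tfrac12+it)|$; the identity extends to the zeros of $\zeta(\tfrac12+it)$ by continuity, or directly since there $Z'(t)=ie^{i\theta(t)}\zeta'(\tfrac12+it)=ie^{i\theta(t)}Z_1(\tfrac12+it)$.

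The only spots demanding care are the symmetry $\omega(1-s)=\omega(s)$ — it is exactly what makes $-Z_1(s)=\chi(s)Z_1(1-s)$ come out with these precise signs and with the factor $\chi(s)$ rather than some twist of it — and the choice of branch of $\chi^{1/2}$ on the critical line, i.e. the normalization $\chi(\tfrac12+it)=e^{-2i\theta(t)}$ with $\theta$ real. Everything else is routine.
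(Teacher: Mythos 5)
Your proof is correct: part (ii) follows from differentiating $\zeta(s)=\chi(s)\zeta(1-s)$ together with $\omega(1-s)=\omega(s)$, and part (i) from $Z'(t)=ie^{i\theta(t)}Z_1(\tfrac12+it)$ with $\theta$ real, exactly as you compute. The paper itself gives no argument here — it simply refers to the lemma in Conrey and Ghosh — and your direct computation is the standard one carried out there, so there is nothing to compare beyond noting that you have supplied the details the paper omits.
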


\begin{proof}
See the proof of the lemma in \cite{C&G}.
\end{proof}

By Stirling's formula, we can show that
\begin{lemma}
For $|\arg s|<\pi$ and $t\geq1$, we have
\begin{equation}
\chi(1-s)=e^{-\frac{\pi i}{4}}\left(\frac{t}{2\pi} \right)^{\sigma-\frac{1}{2}}\exp \left(it\log \frac{t}{2\pi e}\right) \left(1+O\left(\frac{1}{t}\right)\right)
\end{equation}
and
\begin{equation}\label{log-stirling}
\frac{\chi'}{\chi}(s)=-\log \frac{t}{2\pi}+O\left(\frac{1}{t} \right).
\end{equation}
\end{lemma}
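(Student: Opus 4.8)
The plan is to derive both estimates from Stirling's formula applied to the two classical representations of the functional-equation factor. First I would rewrite $\chi$ in its symmetric Gamma-quotient form $\chi(s)=\pi^{s-\frac12}\,\Gamma\!\left(\frac{1-s}{2}\right)\big/\Gamma\!\left(\frac{s}{2}\right)$, which follows from the definition $\chi(s)=2^{s}\pi^{s-1}\sin\!\left(\frac{\pi s}{2}\right)\Gamma(1-s)$ by the reflection and Legendre duplication formulas for $\Gamma$. The only analytic input is then Stirling's expansion $\log\Gamma(z)=\left(z-\frac12\right)\log z-z+\frac12\log(2\pi)+O(1/|z|)$ and its derivative form $\frac{\Gamma'}{\Gamma}(z)=\log z+O(1/|z|)$, both uniform in any sector $|\arg z|\le\pi-\delta$. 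For $s=\sigma+it$ with $t\ge1$ and $\sigma$ in a bounded range, both $\frac{s}{2}$ and $\frac{1-s}{2}$ lie in such a sector with $\left|\frac{s}{2}\right|,\left|\frac{1-s}{2}\right|\asymp t$, so these hold with error $O(1/t)$.

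For $(\ref{log-stirling})$ I would differentiate the symmetric form to obtain $\frac{\chi'}{\chi}(s)=\log\pi-\frac12\frac{\Gamma'}{\Gamma}\!\left(\frac{s}{2}\right)-\frac12\frac{\Gamma'}{\Gamma}\!\left(\frac{1-s}{2}\right)$ and insert $\frac{\Gamma'}{\Gamma}(z)=\log z+O(1/t)$. Since $\frac{s}{2}$ and $\frac{1-s}{2}$ have arguments tending to $+\frac\pi2$ and $-\frac\pi2$, their logarithms add up cleanly: $\log\frac{s}{2}+\log\frac{1-s}{2}=\log\frac{s(1-s)}{4}=2\log\frac{t}{2}+O(1/t)$, using $s(1-s)=t^{2}(1+O(1/t))$. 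The right-hand side therefore collapses to $\log\pi-\log\frac{t}{2}+O(1/t)=-\log\frac{t}{2\pi}+O(1/t)$, which is the second formula.

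For the first formula I would apply the same machinery to $\chi(1-s)=\pi^{\frac12-s}\,\Gamma\!\left(\frac{s}{2}\right)\big/\Gamma\!\left(\frac{1-s}{2}\right)$. Taking logarithms and substituting Stirling into both Gamma factors, the $\frac12\log(2\pi)$ constants cancel and leave $\log\chi(1-s)=\left(\frac12-s\right)\log\pi+\frac{s-1}{2}\log\frac{s}{2}+\frac{s}{2}\log\frac{1-s}{2}-\left(s-\frac12\right)+O(1/t)$. I would then expand $\log\frac{s}{2}=\log\frac{t}{2}+\frac{i\pi}{2}-\frac{i\sigma}{t}+O(1/t^{2})$ and $\log\frac{1-s}{2}=\log\frac{t}{2}-\frac{i\pi}{2}+\frac{i(1-\sigma)}{t}+O(1/t^{2})$ and collect: the $\log\frac{t}{2}$ terms combine to $\left(s-\frac12\right)\log\frac{t}{2}$, the $\pm\frac{i\pi}{2}$ terms to $-\frac{i\pi}{4}$, the two $\frac{\pi t}{4}$ terms cancel, and the products of the linear-in-$t$ parts of the coefficients with the $O(1/t)$ log corrections contribute $\frac{\sigma}{2}-\frac{1-\sigma}{2}=\sigma-\frac12$. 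Combining these with $\left(\frac12-s\right)\log\pi$ and $-\left(s-\frac12\right)$ reduces everything to $\log\chi(1-s)=\left(s-\frac12\right)\log\frac{t}{2\pi}-it-\frac{i\pi}{4}+O(1/t)$. Matching this against the logarithm of the claimed expression via $\left(\sigma-\frac12\right)\log\frac{t}{2\pi}+it\log\frac{t}{2\pi e}=\left(s-\frac12\right)\log\frac{t}{2\pi}-it$ and exponentiating (so the $O(1/t)$ becomes the factor $1+O(1/t)$) gives the first formula.

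The hard part will be the bookkeeping in this last step. Because the Gamma arguments grow linearly in $t$, the first-order terms $-\frac{i\sigma}{t}$ and $\frac{i(1-\sigma)}{t}$ in the expansions of $\log\frac{s}{2}$ and $\log\frac{1-s}{2}$ are multiplied by coefficients of size $\asymp t$ and hence are \emph{not} negligible; one must retain exactly these cross terms while discarding the genuine $O(1/t)$ remainders, and verify that the spurious $\frac{\pi t}{4}$ contributions really cancel. I would also need to confirm that all implied constants are uniform for $\sigma$ in the bounded vertical strip relevant to the contour, so that the single error $O(1/t)$ is legitimate throughout the range of integration.
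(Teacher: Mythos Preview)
Your proposal is correct and follows exactly the route the paper indicates: the paper's entire proof is the single sentence ``By Stirling's formula, we can show that'', and you have supplied the standard details of that computation via the symmetric Gamma-quotient form of $\chi$. Your bookkeeping of the cross terms is accurate, and your caveat that the implied constants are uniform only for $\sigma$ in a bounded strip is appropriate (and is all that the later applications in the paper require).
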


If RH is true, then the Lindel\"{o}f Hypohesis is also true.
Therefore we can obtain the following estimates. 
\begin{lemma}\label{lindelof}
If the RH is true, then for $|t|\geq1$,
\begin{equation*}
\zeta(s) \ll \begin{cases} 1 & \text{$1< \sigma$,} \\
|t|^{\varepsilon} & \text{$\frac{1}{2}\leq \sigma \leq1$,} \\
|t|^{\frac{1}{2}-\sigma+\varepsilon} & \text{$\sigma <\frac{1}{2}$.}
\end{cases}
\end{equation*}
\end{lemma}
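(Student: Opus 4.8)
This is a routine consequence of one classical theorem — that the Riemann Hypothesis implies the Lindel\"{o}f Hypothesis (Littlewood; see e.g.\ Titchmarsh's monograph on the Riemann zeta-function, Chapter~14) — together with the functional equation and Stirling's formula, the latter already available in the Stirling-type estimates of Lemma~3.2. The plan is to treat the three ranges in turn.

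When $\sigma>1$, only absolute convergence of $\sum_n n^{-s}$ is needed: $|\zeta(s)|\le\sum_n n^{-\sigma}=\zeta(\sigma)$, which is bounded, with the implied constant depending on a lower bound for $\sigma$ (and even when $\sigma$ is taken close to $1$ the classical $\zeta(\sigma+it)\ll\log|t|\ll_{\varepsilon}|t|^{\varepsilon}$ is consistent with the displayed estimate, so nothing is lost). When $\tfrac12\le\sigma\le1$, I would simply quote the Lindel\"{o}f Hypothesis: under RH one has $\zeta(\tfrac12+it)\ll_{\varepsilon}|t|^{\varepsilon}$, while $\zeta(1+it)\ll\log|t|\ll_{\varepsilon}|t|^{\varepsilon}$ holds classically, and the Phragm\'{e}n--Lindel\"{o}f convexity principle in the strip $\tfrac12\le\sigma\le1$ then propagates the bound across the whole strip. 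If one wants to see where RH is actually used: the only input is that $\zeta$ has no zero with $\sigma>\tfrac12$, whence (bounding $\log\zeta$ by a sum over the zeros of $\zeta$, via a Borel--Carath\'{e}odory or Hadamard three-circles estimate) $\log|\zeta(\sigma+it)|\ll\log|t|/\log\log|t|$ for $\sigma\ge\tfrac12+1/\log\log|t|$, and a short convexity step covers the remaining thin strip down to $\sigma=\tfrac12$.

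When $\sigma<\tfrac12$, I would reflect by the functional equation $\zeta(s)=\chi(s)\zeta(1-s)$. Since $\Re(1-s)=1-\sigma>\tfrac12$, the first two ranges give $\zeta(1-s)\ll_{\varepsilon}|t|^{\varepsilon}$; and Stirling's formula gives $|\chi(s)|\asymp(|t|/2\pi)^{1/2-\sigma}$ (conveniently, from $\chi(s)\chi(1-s)=1$ together with the estimate for $\chi(1-s)$ recorded in Lemma~3.2), so that $|\zeta(s)|\ll(|t|/2\pi)^{1/2-\sigma}|t|^{\varepsilon}\ll|t|^{1/2-\sigma+\varepsilon}$. (When $\sigma<0$ one has the stronger $\zeta(1-s)\ll1$, which is still consistent with the stated bound.)

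I do not anticipate a real obstacle: the one genuinely nontrivial ingredient, the implication RH $\Rightarrow$ Lindel\"{o}f, is classical and may be quoted, and everything else is the standard functional-equation reflection. The only points needing a little care are the uniformity in $\sigma$ as $\sigma\to1^{+}$ in the first range and handling the arbitrary, constant-absorbing parameter $\varepsilon$ consistently across the three ranges (and, if one spells out RH $\Rightarrow$ Lindel\"{o}f, the convexity interpolation near $\sigma=\tfrac12$).
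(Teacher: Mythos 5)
Your proposal is correct and follows essentially the same route as the paper, which offers no written proof beyond the remark that RH implies the Lindel\"{o}f Hypothesis and that the stated estimates then follow (implicitly via the convexity/functional-equation reflection you spell out). Your additional care about uniformity as $\sigma\to1^{+}$ is a reasonable refinement but changes nothing, since the lemma is only ever applied at abscissas bounded away from $1$.
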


As in the paper of Conrey and Ghosh \cite{C&G}, we apply the following lemma by Gonek \cite{Go},
\begin{lemma}[Gonek]\label{G'slem}
Let $\{b_n \}_{n=1}^{\infty}$ be a sequence of complex numbers such that $b_n \ll n^{\varepsilon}$ for any $\varepsilon >0$. Let $a>1$ and let $m$ be a non-negative integer. Then for any sufficiently large $T$,
\begin{equation*}
\begin{split}
&\quad \frac{1}{2\pi}\int_{1}^{T}\left(\sum_{n=1}^{\infty}b_n n^{-a-it} \right)\chi(1-a-it)
\left(\log \frac{t}{2\pi} \right)^m dt \\
&= \sum_{1\leq n\leq T/2\pi} b_n(\log n)^m+O(T^{a-\frac{1}{2}}(\log T)^m).
\end{split}
\end{equation*}
\end{lemma}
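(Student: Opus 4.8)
The plan is to substitute the Stirling asymptotic for $\chi(1-a-it)$ provided by Lemma 2.2 and then evaluate the resulting oscillatory integral term by term via the saddle-point method. Since $b_n \ll n^{\varepsilon}$ and $a > 1$, the Dirichlet series $\sum_n b_n n^{-a-it}$ converges absolutely and uniformly in $t$, so I may interchange summation and integration. Writing $s = a + it$, the $n$-th integral then carries the smooth amplitude
\[
g(t) = \left(\frac{t}{2\pi}\right)^{a-\frac12}\left(\log\frac{t}{2\pi}\right)^m
\]
together with the fixed prefactor $\tfrac{1}{2\pi}b_n n^{-a}e^{-\pi i/4}$, while combining $n^{-it} = e^{-it\log n}$ with the exponential in Lemma 2.2 produces the phase
\[
\phi_n(t) = t\log\frac{t}{2\pi n e}, \qquad \phi_n'(t) = \log\frac{t}{2\pi n}, \qquad \phi_n''(t) = \frac{1}{t}.
\]
Hence $\phi_n$ has a single stationary point at $t = 2\pi n$, and this point lies inside the range $[1,T]$ exactly when $n \le T/2\pi$ --- precisely the range in the sum on the right-hand side.

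The decisive feature is that at the saddle the phase is an integer multiple of $2\pi$: one computes $\phi_n(2\pi n) = -2\pi n$, so $e^{i\phi_n(2\pi n)} = e^{-2\pi i n} = 1$ because $n$ is an integer. Applying the stationary-phase formula with $\phi_n''(2\pi n) = 1/(2\pi n) > 0$ yields a saddle contribution to the integral equal to
\[
g(2\pi n)\sqrt{\frac{2\pi}{\phi_n''(2\pi n)}}\,e^{i\pi/4} = n^{a-\frac12}(\log n)^m\cdot 2\pi\sqrt{n}\,e^{i\pi/4} = 2\pi n^{a}(\log n)^m e^{i\pi/4}.
\]
Multiplying by the prefactor $\tfrac{1}{2\pi}b_n n^{-a}e^{-\pi i/4}$, the quarter-turn phases $e^{\pm i\pi/4}$ cancel and the powers of $n$ collapse, leaving exactly $b_n(\log n)^m$. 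Summing over $1 \le n \le T/2\pi$ reproduces the main term of the lemma.

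It remains to bound the error, which has three sources: the $O(1/t)$ relative error in the Stirling expansion, the intrinsic error of each stationary-phase approximation, and the contribution of the non-stationary terms with $n > T/2\pi$. For the last group $\phi_n$ has no saddle in $[1,T]$, so I would apply the first-derivative test with the lower bound $|\phi_n'(t)| = \log(2\pi n/t)$, whose endpoint term at $t = T$ is governed by $g(T) \asymp T^{a-1/2}(\log T)^m$. The $n^{-a}$ weight together with the convergence of $\sum_n n^{-a}$ is what keeps the total contribution of the upper endpoint at size $O(T^{a-1/2}(\log T)^m)$, while the lower endpoint $t = 1$, where $g(1) \asymp 1$, is negligible.

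The main obstacle is the transition region $n \approx T/2\pi$, where the saddle point sits at or near the endpoint $t = T$. There the standard stationary-phase formula degenerates (the saddle is not interior) and the first-derivative test bound blows up as $|\phi_n'(T)| = |\log(T/2\pi n)| \to 0$. Handling this range requires a saddle-point estimate that remains uniform as the stationary point approaches the boundary --- for instance by isolating a short neighbourhood of $t = T$ and treating it directly --- together with a careful accounting of how the main term $b_n(\log n)^m$ switches on as $n$ crosses $T/2\pi$. I expect this boundary bookkeeping, rather than any single estimate, to be the most delicate part of the argument.
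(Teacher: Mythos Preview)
The paper does not give its own proof of this lemma: it is stated as a quotation from Gonek's 1984 paper and simply cited. So there is no argument in the paper to compare against.

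That said, your stationary-phase sketch is exactly the method by which Gonek establishes results of this type. The saddle-point computation you wrote is correct: the stationary point of $\phi_n(t)=t\log(t/2\pi n e)$ is at $t=2\pi n$, the phase there is $-2\pi n$ so $e^{i\phi_n(2\pi n)}=1$, and the amplitude and $e^{\pm i\pi/4}$ factors combine to give precisely $b_n(\log n)^m$. Your identification of the three error sources is also accurate, and you are right that the delicate point is the range $n\approx T/2\pi$ where the saddle approaches the upper endpoint; Gonek handles this by a direct estimate on a short interval near $T$ together with the second-derivative test, which is the uniform treatment you anticipate needing. Your sketch is therefore a faithful outline of the standard proof, with the boundary analysis correctly flagged as the part requiring the most care.
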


\section{The proof of the theorem}

It is sufficient to prove the theorem in the case where $T$ does not coincide with the ordinate of the zeros of $\zeta(s)$. By the assumption of RH, we can consider $m|Z'(\gamma)|^2$ as the residue of $\frac{\zeta'}{\zeta}(s)Z_1(s)Z_1(1-s)$ at the zero $\frac{1}{2}+i\gamma$ of the Riemann $\zeta$-function, where $m$ is the multiplicity of the zero.
Hence when we denote our sum by $M(T)$, by the residue theorem, we see that
\begin{equation*}
M(T)=\frac{1}{2\pi i}\int_{C}\frac{\zeta'}{\zeta}(s)Z_1(s)Z_1(1-s)ds.
\end{equation*}
By Lemma \ref{lindelof}, the integral on the horizontal line can be estimated as $O(T^{\frac{1}{8}+\varepsilon})$. Therefore it satisfies that
\begin{align*}
M(T)
&= \frac{1}{2\pi i}\int_{c+i}^{c+iT} \frac{\zeta'}{\zeta}(s)Z_1(s)Z_1(1-s)ds \\
&\quad + \frac{1}{2\pi i}\int_{1-c+iT}^{1-c+i}\frac{\zeta'}{\zeta}(s)Z_1(s)Z_1(1-s)ds
+O(T^{\frac{1}{8}+\varepsilon}) \\
       &= I_{1}+I_{2}+O(T^{\frac{1}{8}+\varepsilon}),
\end{align*}
say.
On the integral $I_2$,
\begin{align*}
I_2&= -\frac{1}{2\pi i}\int_{1-c+i}^{1-c+iT}\frac{\zeta'}{\zeta}(s)Z_1(s)Z_1(1-s)ds \\
    &= -\frac{1}{2\pi i}\int_{1-c+i}^{1-c+iT}\left(\frac{\chi'}{\chi}(s)-\frac{\zeta'}{\zeta}(1-s)\right)Z_1(s)Z_1(1-s)ds \\
    &= -\frac{1}{2\pi i}\int_{1-c+i}^{1-c+iT}\frac{\chi'}{\chi}(s)Z_1(s)Z_1(1-s)ds \\
    &\quad +\frac{1}{2\pi i}\int_{1-c+i}^{1-c+iT}\frac{\zeta'}{\zeta}(1-s)Z_1(s)Z_1(1-s)ds.
\end{align*}
When we replace $s$ by $1-s$, the second integral is 
\begin{equation*}
-\frac{1}{2\pi i}\int_{c-i}^{c-iT}\frac{\zeta'}{\zeta}(s)Z_1(s)Z_1(1-s)ds= \overline{I_1}.
\end{equation*}
Now we see that 
\[M(T)=-\frac{1}{2\pi i}\int_{1-c+i}^{1-c+iT}\frac{\chi'}{\chi}(s)Z_1(s)Z_1(1-s)ds+2\Re{I_1}+O(T^{\frac{1}{8}+\varepsilon}). \]
By Cauchy's integral theorem, the first integral is equal to
\[-\frac{1}{2\pi i}\int_{\frac{1}{2}+i}^{\frac{1}{2}+iT}\frac{\chi'}{\chi}(s)Z_1(s)Z_1(1-s)ds+O(T^{\frac{1}{8}+\varepsilon}). \]
This error term is derived from the integral on the horizontal line.
By (\ref{log-stirling}) and Lemma \ref{lindelof} we see that the above integral is
\begin{equation}\label{mean-value}
\frac{1}{2\pi} \int_{1}^{T}\log \frac{t}{2\pi} Z'(t)^2dt +O(T^{\varepsilon}).
\end{equation}
Therefore when we put
\begin{equation*}
I(t)=\int_{1}^{t}Z'(x)^2dx,
\end{equation*}
using integration by parts and Hall's result, we can show that the integral in (\ref{mean-value}) is equal to
\begin{equation*}
\begin{split}
&\quad \frac{1}{2\pi} \log \frac{T}{2\pi}I(T)-\frac{1}{2\pi}\int_{1}^{T}t^{-1}I(t)dt \\
&=\frac{1}{24\pi}T\log \frac{T}{2\pi}P_3\left(\log \frac{T}{2\pi} \right) \\
&\quad -\frac{1}{24\pi}\int_{1}^{T}P_3\left(\log \frac{t}{2\pi} \right)dt+O(T^{\frac{3}{4}}\log^{\frac{7}{2}}T),
\end{split}
\end{equation*}
and $P_3(x)$ is explicitly written as
\begin{equation}\label{P}
\begin{split}
P_3(x)&=x^3+3(2\gamma_0-1)x^2 \\
&\quad-6(2\gamma_0+4\gamma_1-1)x+6(2\gamma_0+4\gamma_1+4\gamma_2-1).
\end{split}
\end{equation}
Then we find
\begin{equation}
\begin{split}
&\quad -\frac{1}{2\pi i}\int_{1-c+i}^{1-c+iT}\frac{\chi'}{\chi}(s)Z_1(s)Z_1(1-s)ds \\
&=\frac{1}{24\pi}T\log^4\frac{T}{2\pi}+\frac{3\gamma_0-2}{12\pi}T\log^3\frac{T}{2\pi} \\
&\quad +b_1T\log^2\frac{T}{2\pi}+b_2T\log\frac{T}{2\pi}+b_3T+O(T^{\frac{3}{4}}\log^{\frac{7}{2}}T),
\end{split}
\end{equation}
where $b_i \, (i=1,2,3)$ are constants derived from the coefficients of (\ref{P}).

We calculate $I_1$ to complete the proof. By Cauchy's integral theorem,
\begin{align*}
I_1 &= -\frac{1}{2\pi i}\int_{b+i}^{b+iT}\frac{\zeta'}{\zeta}(s)\chi(1-s)Z_1(s)^2ds+O(T^{b-\frac{1}{2}+\varepsilon}) \\
    &= -\frac{1}{2\pi i}\int_{b+i}^{b+iT}\frac{\zeta'}{\zeta}(s)\chi(1-s)\zeta'(s)^2ds \\
    &\quad +\frac{1}{2\pi i}\int_{b+i}^{b+iT}\omega(s)\zeta'(s)^2\chi(1-s)ds \\
    &\quad -\frac{1}{8\pi i}\int_{b+i}^{b+iT}\omega(s)^2\zeta(s)\zeta'(s)\chi(1-s)ds+O(T^{b-\frac{1}{2}+\varepsilon}) \\
    &= I_{I}+I_{II}+I_{III}+O(T^{b-\frac{1}{2}+\varepsilon}),
\end{align*}
say, where $b=\frac{9}{8}$ and the error term is derived from the integral on the horizontal line.

By applying Lemma \ref{G'slem},
\begin{align*}
I_{I} &= \frac{1}{2\pi i}\int_{b+i}^{b+iT} \left(\sum_{m=1}^{\infty} \frac{\Lambda(m)}{m^s} \right)\left(\sum_{n=1}^{\infty}\frac{D(n)}{n^s} \right)\chi(1-s)ds \\
     &= \frac{1}{2\pi}\int_{1}^{T} \left(\sum_{m=1}^{\infty} \frac{\Lambda(m)}{m^{b+it}} \right)\left(\sum_{n=1}^{\infty}\frac{D(n)}{n^{b+it}} \right)\chi(1-b-it)dt \\
     &= \sum_{1 \leq mn \leq \frac{T}{2\pi}} \Lambda(m)D(n) +O(T^{b-\frac{1}{2}}),
\end{align*}
where
\[D(n)=\sum_{d\mid n}\log d\log\frac{n}{d}. \]
Using Perron's formula and the residue theorem,
\begin{align*}
\sum_{mn \leq x} \Lambda(m)D(n)
&= -\frac{1}{2\pi i}\int_{b-iT}^{b+iT} \frac{\zeta'}{\zeta}(s)\zeta'(s)^2\frac{x^s}{s}ds
+O(x^{\varepsilon})+R \\
&=-\underset{s=1}{\rm Res}\frac{\zeta'}{\zeta}(s)\zeta'(s)^2\frac{x^s}{s}
-\frac{1}{2\pi i}\int_{c-iT}^{c+iT}\frac{\zeta'}{\zeta}(s)\zeta'(s)^2\frac{x^s}{s}ds \\
&\quad+\frac{1}{2\pi i}\int_{b+iT}^{c+iT}\frac{\zeta'}{\zeta}(s)\zeta'(s)^2\frac{x^s}{s}ds \\
&\quad+\frac{1}{2\pi i}\int_{c-iT}^{b-iT}\frac{\zeta'}{\zeta}(s)\zeta'(s)^2\frac{x^s}{s}ds+O(x^{\varepsilon})+R \\
&= -\underset{s=1}{\rm Res}\frac{\zeta'}{\zeta}(s)\zeta'(s)^2\frac{x^s}{s}+O(x^cT^{\varepsilon}+x^bT^{-1+\varepsilon})+R,
\end{align*}
where $R$ is the error term appearing in Perron's formula (see \cite[p.139]{Mon&Vau}) and satisfies that
\begin{align*}
R &= \frac{1}{\pi}\sum_{x/2<mn<x}\Lambda(m)D(n)\ {\rm si} \left(T\log \frac{x}{mn}\right) \\
   &\quad -\frac{1}{\pi}\sum_{x<mn<2x}\Lambda(m)D(n)\ {\rm si} \left(T\log \frac{x}{mn}\right)
+O\left(\frac{(4x)^b}{T}\sum_{mn=1}^{\infty}\frac{|\Lambda(m)D(n)|}{(mn)^b} \right) \\
   &\ll \sum_{\substack{x/2<mn<2x \\ mn \neq x}} |\Lambda(m)D(n)| \min \left(1, \frac{x}{T|x-mn|}\right)+\frac{(4x)^b}{T}\sum_{mn=1}^{\infty}\frac{|\Lambda(m)D(n)|}{(mn)^b}
\end{align*}
with
\begin{equation*}
{\rm si}(x)=-\int_{x}^{\infty}\frac{\sin u}{u}du.
\end{equation*}
By introducing the Dirichlet convolution and considering $x$ as a half integer, the error term $R$ can be estimated as follows;
\begin{align*}
R&\ll\frac{x}{T}\sum_{x/2<n<2x}\left|\frac{{\bf \Lambda} * {\bf D}(n)}{x-n} \right|
+\frac{(4x)^b}{T}\sum_{n=1}^{\infty}\frac{\left|{\bf \Lambda} * {\bf D}(n)\right|}{n^b} \\
&\ll \frac{x^{\varepsilon}}{T}\sum_{x/2<n<2x}\left|\frac{1}{1-\frac{n}{x}} \right|+\frac{x^b}{T} \\
&\ll \frac{x^{\varepsilon}}{T}\log x +\frac{x^b}{T}.
\end{align*}
Therefore we obtain
\begin{equation*}
\sum_{mn \leq x} \Lambda(m)D(n)=-\underset{s=1}{\rm Res}\frac{\zeta'}{\zeta}(s)\zeta'(s)^2\frac{x^s}{s}+O(x^bT^{-1+\varepsilon}+x^cT^{\varepsilon}).
\end{equation*}
This residue is
\begin{align*}
&-\frac{1}{4!}x(\log^4x-4\log^3x+12\log^2x-24\log x+24) \\
&+\frac{\eta_0}{3!}x(\log^3x-3\log^2x+6\log x-6) \\
&+\left(\gamma_1+\frac{\eta_1}{2}\right)x(\log^2x-2\log x+2) \\
&+(\eta_2+4\gamma_2-2\gamma_1\eta_0)x(\log x-1) \\
&+(\eta_3+6\gamma_3-\gamma_1^2-2\eta_1\gamma_1)x,
\end{align*}
where $\eta_k$ are defined by 
\begin{equation*}
\frac{\zeta'}{\zeta}(s)=-\frac{1}{s-1}+\sum_{k=0}^{\infty}\eta_k(s-1)^k
\end{equation*}
and can be expressed by the Stieltjes constants.
The other integrals can be calculated in the same way.  Taking $x=\frac{T}{2\pi }$, we obtain
\[2\Re I_1= \frac{\gamma_0}{12\pi}T\log^{3}\frac{T}{2\pi}+c_1T\log^{2}\frac{T}{2\pi}
+c_2T\log\frac{T}{2\pi}+c_3T+O(T^{c+\varepsilon}), \]
where $c_i \ (i=1,2,3)$ can be expressed by the Stieltjes constants.
Hence 
\begin{equation*}
\begin{split}
M(T)
&= \frac{1}{24\pi} T\log^{4}\frac{T}{2\pi}+\frac{2\gamma_0-1}{6\pi}T\log^{3}\frac{T}{2\pi} \\
&+a_1T\log^{2}\frac{T}{2\pi}+a_2T\log\frac{T}{2\pi}+a_3T + O(T^{\frac{3}{4}}\log^{\frac{7}{2}}T),
\end{split}
\end{equation*}
where $a_i \ (i=1,2,3)$ is the sum of $b_i$ and $c_i$,
and the proof is completed.

\section*{Acknowledgement}
I would like to thank my supervisor Professor Kohji Matsumoto for useful advice.
I am grateful to the seminar members  for indications by which my argument can be sophisticated. I would also like to thank Micah Baruch Milinovich of making me aware of his result.


\begin{thebibliography}{99}
\bibitem{C&G} J. B. Conrey and A. Ghosh. `A mean value theorem for the Riemann zeta-function at its relative extrema on the critical line', {\it J. London Math. Soc.} (2) {\bf 32} (1985) 193-202.
\bibitem{Go} S. M. Gonek. `Mean values of the Riemann zeta-function and its derivatives', {\it Invent. Math.} {\bf 75} (1984)
123-141.
\bibitem{Hall} R. R. Hall. `The behaviour of the Riemann zeta-function on the critical line', {\it Mathematika} {\bf 46} (1999), 281-313.
\bibitem{Mil} M. B. Milinovich. `Mean-Value Estimates for the Derivative of the Riemann Zeta-Function', PhD Thesis, University of Rochester, Rochester, NY USA, 2008.
\bibitem{Mon&Vau} H. L. Montgomery and R. C. Vaughan. `Multiplicative Number Theory: I. Classical Theory', Cambridge Studies in Advanced Mathematics, vol. 97 (Cambridge University Press, Cambridge, 2006).
\end{thebibliography}
\end{document}